\begin{document}
\renewcommand{\proofname}{\bf Proof}
\let\oldsection\section
\renewcommand\section{\setcounter{equation}{0}\oldsection}
\renewcommand\thesection{\arabic{section}}
\renewcommand\theequation{\thesection.\arabic{equation}}
\newtheorem{claim}{\indent Claim}[section]
\newtheorem{theorem}{\indent Theorem}[section]
\newtheorem{lemma}{\indent Lemma}[section]
\newtheorem{proposition}{\indent Proposition}[section]
\newtheorem{definition}{\indent Definition}[section]
\newtheorem{remark}{\indent Remark}[section]
\newtheorem{corollary}{\indent Corollary}[section]
\newtheorem{example}{\indent Example}[section]
\title{\Large \bf
The Paneitz-Sobolev constant of a closed Riemannian\\ manifold
and an application to the nonlocal \\$\mathbf{Q}$-curvature flow}

\author{Xuezhang  Chen\thanks{ X. Chen: xuezhangchen@nju.edu.cn}
\\
 \small
$^\ast$Department of Mathematics \& IMS, Nanjing University, Nanjing
210093, P. R. China
}

\date{}
\maketitle
\begin{abstract}
In this paper, we establish that: Suppose a closed Riemannian manifold $(M^n,g_0)$ of dimension $\geq 8$ is not locally conformally flat, then the Paneitz-Sobolev constant of $M^n$ has the property that $q(g_0)<q(S^n)$. The analogy of this result was obtained by T. Aubin in 1976 and had been used to solve the Yamabe problem on closed manifolds. As an application, the above result can be used to recover the sequential convergence of the nonlocal Q-curvature flow on closed manifolds recently introduced by Gursky-Malchiodi. 
\end{abstract}

\section{Introduction and the main result}

\indent \indent Similar to the Yamabe problem, or more generally, to the
prescribing scalar curvature problem on $S^n$, a nature problem on closed manifolds involving the fourth order
conformally covariant operator can be proposed as follows:\\

 On a closed manifold $(M^n,g_0)$ of dimension $\geq 5$,
does there exist a conformal metric $g=u^{4 \over n-4}g_0$ with constant $Q_g$-curvature? \\

The above problem on $M^n$ with $n \geq 5$ is reduced to the
solvability of the following equation
\begin{equation}\label{prescribed_Q-curvature}
P_{g_0}( u)=\tfrac{n-4}{2} c u^{n+4 \over n-4} \text{~~and~~} u>0 \hbox{~~on~~} M^n,
\end{equation}
where $c$ is a constant and $P_{g_0}$ is fourth-order conformally covariant operator on $(M^n,g_{g_0})$, which will be defined below soon. Recently, under the assumptions that $Q_{g_0}$ is semi-positive and the scalar curvature $R_{g_0}$ is nonnegative, an affirmative answer to the above problem is given by Gursky-Malchiodi \cite{gur_mal}. For more background of the above mentioned problem, one may refer to \cite{gur_mal} and the references therein. \\
\indent Let $(M^n,g)$ be a smooth Riemannian manifold  of dimension
larger than four, and $R_g, \text{Ric}_g$ be the scalar curvature,
Ricci curvature of metric $g$, respectively. The following
conformally covariant operator of order four on $(M^n,g)$ is
discovered by T. Branson \cite{branson} and Fefferman-Graham
\cite{feffgra}, concretely,
\begin{eqnarray}
P_g u  &=& \Delta_g^2+\hbox{div}_g\{(4A_g-(n-2)\sigma_1(A_g)g)(\nabla u, \cdot)\}+\frac{n-4}{2}Q_g u \label{P_g}
\end{eqnarray}
and the $Q$-curvature $Q_g$ of metric $g$ is defined by
\begin{eqnarray}\label{def_Q-curv}
Q_g=-{1 \over 2(n-1)}\Delta_g R_g+{n^3-4n^2+16n-16 \over 8(n-1)^2(n-2)^2} R_g^2-{2 \over (n-2)^2}|\text{Ric}_g|^2,
\end{eqnarray}
with the Schouten tensor $A_g=\frac{1}{n-2}(\hbox{Ric}_g-\frac{R_g}{2(n-1)}g)$.
Under  conformal change $\bar{g}=u^{4 \over n-4}g$, there holds
\begin{equation}\label{conformal-invariant}
P_g(\varphi u)=u^{n+4 \over n-4}P_{\bar{g}}(\varphi)
\end{equation}
for all $\varphi \in C^\infty(M^n)$.

We first state the main reuslt of this paper analogous to T. Aubin's \cite{aubin} in the Yamabe problem.
\begin{theorem}\label{Thm1}
On a closed Riemannian manifold $(M^n,g_0)$ of dimension $\geq 8$, suppose there exists $p \in M^n$ such that the Weyl tensor $W_{g_0}(p) \neq 0$, then $q(g_0)<q(S^n)$.
\end{theorem}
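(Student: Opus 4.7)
The strategy is an adaptation of Aubin's test function method for the Yamabe problem: I would construct an explicit family of test functions concentrated near $p$ and show that the Paneitz--Sobolev quotient on these test functions drops below $q(S^n)$ for small concentration parameter, the drop being driven by $|W_{g_0}(p)|^2$.

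First, using the conformal invariance of $q(g_0)$, I would replace $g_0$ by a conformal metric $g$ for which $p$ is a conformal normal coordinate point of high order $N$, in the sense of Lee--Parker and G\"unther. In the resulting $g$-normal coordinates one has $\det g_{ij}(x)=1+O(|x|^{N})$, and the scalar curvature together with the totally symmetric traces of $\mathrm{Ric}_g$ and of its covariant derivatives of moderate order all vanish at $p$. This eliminates the trace curvature contributions near $p$ and isolates the Weyl tensor as the only surviving pointwise curvature invariant at $p$.

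Second, as test function I would take
\begin{equation*}
u_\varepsilon(x)\;=\;\eta(|x|)\,U_\varepsilon(x),\qquad U_\varepsilon(x)\;=\;\Big(\frac{\varepsilon}{\varepsilon^{2}+|x|^{2}}\Big)^{(n-4)/2},
\end{equation*}
where $\eta$ is a smooth radial cutoff supported in a normal chart around $p$ and equal to $1$ in a neighborhood of $p$, and $U_\varepsilon$ is the Aubin--Talenti extremizer for the Paneitz--Sobolev inequality on $\mathbb{R}^{n}$, so that $\Delta^{2}U_\varepsilon=\alpha_n U_\varepsilon^{(n+4)/(n-4)}$ and $U_\varepsilon$ attains $q(S^n)$ on Euclidean space. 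Inserting the Taylor expansion $g^{ij}(x)=\delta_{ij}-\tfrac{1}{3}R_{ikjl}(p)\,x^{k}x^{l}+O(|x|^{3})$ into the formula for $P_g$ and integrating by parts, I would compute both $\int_M u_\varepsilon P_g u_\varepsilon\,dv_g$ and $\big(\int_M |u_\varepsilon|^{2n/(n-4)}\,dv_g\big)^{(n-4)/n}$ as power series in $\varepsilon$; the leading term of the quotient equals $q(S^n)$ by construction, and cutoff-tail contributions are negligible.

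The main obstacle is to identify the first nontrivial correction and verify that it is strictly negative. After the conformal normal coordinate reductions, Ricci and scalar contributions at $p$ drop out of the expansion and the surviving quadratic curvature invariant at $p$ is $|W_g(p)|^2$; I would expect the final asymptotic to take the form
\begin{equation*}
\frac{\int_M u_\varepsilon P_g u_\varepsilon\,dv_g}{\big(\int_M |u_\varepsilon|^{2n/(n-4)}\,dv_g\big)^{(n-4)/n}}\;=\;q(S^n)\,-\,C(n)\,|W_g(p)|^{2}\,\varepsilon^{4}\,+\,o(\varepsilon^{4}),
\end{equation*}
with an explicit positive constant $C(n)$ computable from integrals of the flat model bubble $U_1$ against curvature-weighted polynomials. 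The convergence of those model integrals is precisely what forces the dimensional restriction $n\geq 8$: the slower decay of the Paneitz extremizers, compared with the second-order Yamabe bubbles, pushes the Aubin threshold from $n\geq 6$ (Yamabe) up to $n\geq 8$ (Paneitz); below this threshold a positive-mass-type theorem for $P_g$ would be needed, paralleling Schoen's resolution of the Yamabe problem in low dimensions. Once $C(n)>0$ is verified by the explicit flat-space computation, the hypothesis $W_{g_0}(p)\neq 0$ yields $q(g_0)=q(g)<q(S^n)$, completing the proof.
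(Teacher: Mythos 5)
Your plan matches the paper's proof in all essentials: pass to conformal normal coordinates at $p$ (Lee--Parker) to kill the Ricci-trace contributions, insert the Aubin--Talenti bubble $U_\varepsilon$ cut off near $p$, expand the Paneitz quotient term by term so that $|W(p)|^2$ is the surviving second-order curvature obstruction, and verify the sign of the resulting coefficient by an explicit flat-model computation. The paper's version of the bookkeeping is to work with the integrated-by-parts form
$\int\big[|\Delta_g u|^2-4A_g(\nabla u,\nabla u)+(n-2)\sigma_1(A_g)|\nabla u|^2+\tfrac{n-4}{2}Q_g u^2\big]\,d\mu_g$
and feed in the conformal-normal-coordinate identities $R_g=O(r^2)$, $-\Delta_g R_g(p)=\tfrac16|W(p)|^2$, $Q_g(p)=\tfrac{|W(p)|^2}{12(n-1)}$, rather than substituting the Taylor expansion of $g^{ij}$ into $P_g$ directly; these are equivalent routes.

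There is, however, a genuine gap at the borderline dimension. Your stated asymptotic
$q(S^n)-C(n)\,|W(p)|^2\,\varepsilon^4+o(\varepsilon^4)$
is correct only for $n>8$, and fails precisely at $n=8$, where the flat-model integral producing $C(n)$ diverges logarithmically; the correct form there is $q(S^n)-c\,|W(p)|^2\,\varepsilon^4\log(1/\varepsilon)+O(\varepsilon^4)$. This is not cosmetic: the cutoff-tail errors are $O(\varepsilon^{n-4})$, which at $n=8$ equals $O(\varepsilon^4)$, the \emph{same} order as your claimed main correction, so a clean $\varepsilon^4$ expansion cannot beat the error at $n=8$ and the argument does not close without the extra logarithm. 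The paper isolates exactly this point in its Lemma 3.1, which shows
\begin{equation*}
\int_0^{\epsilon/\alpha}\Big[1-\tfrac{(n-4)(n^2-4n+8)}{n(n-2)}\tfrac{\sigma^4}{(1+\sigma^2)^2}\Big](1+\sigma^2)^{4-n}\sigma^{n-1}\,d\sigma
\end{equation*}
tends to a strictly negative constant for $n>8$ but diverges like $\log\alpha$ for $n=8$. Verifying this sign after the integration by parts is the substantive content of the theorem (your ``computable $C(n)>0$''), and you should state the $n=8$ expansion correctly rather than fold it into the same formula as $n>8$.
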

We should point out here that some ideas of the proof of Theorem \ref{Thm1} involve the ones in \cite{er}.

The structure of this paper is as follows: In section \ref{sec2}, some standard results to experts in this field are presented. In section \ref{sec3}, we establish the main result of this paper, Theorem \ref{Thm1}. In section \ref{sec4}, the main Theorem \ref{Thm1} is applied to recover the sequential convergence of the nonlocal $\mathbf{Q}$-curvature flow in \cite{gur_mal}.

\begin{remark}
From the comments of Fr$\acute{e}$d$\acute{e}$ric Robert,  the two formulae (21) and (22) on page 511 in \cite{er} may also involve Theorem \ref{Thm1}. For the readers' convenience, one may compare their computations with ours appeared in the proof of Theorem \ref{Thm1}.
\end{remark}

{\bf Acknowledgments:} 
The author is partially supported through NSFC (No.11201223) and Program for New Century Excellent Talents in University (NCET-13-0271). He would thank Dr. Yalong Shi for many stimulating discussions, especially in Lemma 3.1 and Professor Xingwang Xu for helpful comments and precious advice on the nonlocal $\mathbf{Q}$-curvature flow.

\section{Preliminaries}\label{sec2}

\indent \indent The following results in this section are standard for the experts in this field. Let $(M^n,g_0)$ be a closed Riemannian manifold. Define a energy functional on $C^\infty(M^n)$ by
$$E_{g_0}[u]=\int_{M^n} u P_{g_0}(u)d\mu_{g_0},$$
where $P_{g_0}$ is defined as (\ref{P_g}).

\begin{lemma}\label{conf_inv_P_g}
For any conformal metric $g$
of $g_0$, then the Paneitz-Sobolev constant
$$q(g)=q(M^n, g)\equiv\inf\left\{\frac{\int_{M^n}w P_g(w)d\mu_g}
{(\int_{M^n}w^{2n \over n-4}d\mu_g)^{n-4 \over n}}; w \in
C^\infty(M^n)\setminus \{0\}\right\}$$ is independent of the selection of
metrics in the conform class of $g_0$.
\end{lemma}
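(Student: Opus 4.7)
The plan is to exploit the conformal covariance identity (\ref{conformal-invariant}) together with the transformation law of the volume element, and show that the Rayleigh quotient for $g$ at a test function $w$ coincides with the Rayleigh quotient for $g_0$ at the rescaled test function $wu$, where $g = u^{4/(n-4)}g_0$.

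More concretely, first I would write $g = u^{4/(n-4)} g_0$ for some smooth positive function $u$, which is possible since $g$ is assumed to lie in the conformal class of $g_0$. Then $d\mu_g = u^{2n/(n-4)} d\mu_{g_0}$. For an arbitrary test function $w \in C^\infty(M^n)\setminus\{0\}$, applying the conformal covariance (\ref{conformal-invariant}) with $\varphi = w$ and the metrics $g_0, g$ in place of $g, \bar g$ gives
\[
P_{g_0}(wu) = u^{\frac{n+4}{n-4}} P_g(w).
\]
Multiplying by $wu$ and integrating against $d\mu_{g_0}$, the exponent on $u$ becomes $\tfrac{n+4}{n-4}+1 = \tfrac{2n}{n-4}$, so that
\[
\int_{M^n} (wu)\,P_{g_0}(wu)\,d\mu_{g_0} \;=\; \int_{M^n} w\,P_g(w)\,u^{\frac{2n}{n-4}}\,d\mu_{g_0} \;=\; \int_{M^n} w\,P_g(w)\,d\mu_g.
\]
A parallel and even shorter computation for the denominator yields
\[
\int_{M^n}(wu)^{\frac{2n}{n-4}}\,d\mu_{g_0} \;=\; \int_{M^n} w^{\frac{2n}{n-4}}\,d\mu_g.
\]

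Dividing these identities, the Rayleigh quotient in the definition of $q(g)$ evaluated at $w$ equals the Rayleigh quotient in the definition of $q(g_0)$ evaluated at $wu$. Since $u$ is smooth and strictly positive, the map $w \mapsto wu$ is a bijection of $C^\infty(M^n)\setminus\{0\}$ onto itself, so the two infima coincide and hence $q(g) = q(g_0)$. There is no real obstacle here: the only thing to be careful about is the exponent arithmetic, in particular verifying $\tfrac{n+4}{n-4}+1 = \tfrac{2n}{n-4}$ and $(wu)^{2n/(n-4)} d\mu_{g_0} = w^{2n/(n-4)} d\mu_g$, so that both numerator and denominator transform with exactly the same conformal weight.
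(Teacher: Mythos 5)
Your proof is correct and takes essentially the same approach as the paper: both rely on the conformal covariance identity (\ref{conformal-invariant}) to show that the Rayleigh quotient for $g$ at $w$ equals that for $g_0$ at $wu$. The paper phrases this as two inequalities $q(g_1)\leq q(g_2)$ and $q(g_2)\leq q(g_1)$ obtained by swapping the roles of the metrics, whereas you observe directly that $w\mapsto wu$ is a bijection of $C^\infty(M^n)\setminus\{0\}$; these are cosmetically different packagings of the same argument.
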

\begin{proof} For any $g_1,g_2$ in the conformal class of $g_0$, there exists some positive smooth function $\varphi$
such that $g_2=\varphi^{4 \over n-4}g_1$. Thus by the definition of
$q(g_1)$ and (\ref{conformal-invariant}), for any $w \in
C^\infty(M^n)\setminus \{0\}$, there holds
\begin{eqnarray*}
q(g_1) \leq \frac{\int_{M^n}w \varphi P_{g_1}(w
\varphi) d\mu_{g_1}}{(\int_{M^n} (w \varphi)^{2n \over
n-4}d\mu_{g_1})^{n-4 \over n}}=\frac{\int_{M^n}w
P_{g_2}(w)d\mu_{g_2}}{(\int_{M^n}w^{2n \over n-4}d\mu_{g_2})^{n-4
\over n}}.
\end{eqnarray*}
Taking the infimum for all $w \in
C^\infty(M^n)\setminus \{0\}$ over the above inequality to show $q(g_1)\leq q(g_2)$. Exchanging $g_1$ and $g_2$, we also obtain $q(g_2) \leq q(g_2)$. Thus $q(g_1)=q(g_2)$ holds for any conformal metrics $g_1, g_2$ of $g_0$. 
\end{proof}

Next we establish the so-called ``Kazdan-Warner" condition for prescribed $Q$-curvature problem on $S^n$. \\
\indent Let $g=u^{4 \over n-4} g_{S^n}$ with $0<u \in
C^\infty(S^n)$, in particular set $\varphi=1$ in \eqref{conformal-invariant}, there holds
$$P_{S^n}u={n-4 \over 2} Q_g u^{n+4 \over n-4} \text{~~on~~} S^n.$$

Let $\phi$ be a conformal transformation on $S^n$, define its companion of $u$ related
to $\phi$ by
$$v=(u\circ \phi)|\det d\phi|^{n-4 \over 2n}, \text{~~and~~}\phi^\ast(g_{S^n})=|\det d\phi|^{2/n}g_{S^n}.$$

\begin{lemma}\label{kazdan_warner_con}
For any conformal vector field $X$ on $(S^n,g_{S^n})$, there hold $E_{g_{S^n}}[v]=E_{g_{S^n}}[u]$ and
$$\int_{S^n}<X,\nabla Q_g>_{S^n} u^{2n \over n-4} d\mu_{S^n}=0,$$
where $Q_g$ is the $Q$-curvature of the conformal metric $g=u^{4
\over n-4}g_{S^n}$.
\end{lemma}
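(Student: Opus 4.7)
The plan is to prove the two assertions in turn, both resting on the conformal covariance \eqref{conformal-invariant} of $P_g$ and the naturality of $P$ under diffeomorphisms.

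For the first identity $E_{g_{S^n}}[v]=E_{g_{S^n}}[u]$, I set $\psi:=|\det d\phi|^{(n-4)/(2n)}$, so that $\phi^{*}(g_{S^n})=\psi^{4/(n-4)}g_{S^n}$ and $v=\psi\,(u\circ\phi)$. Applying \eqref{conformal-invariant} to this conformal change and combining with the naturality $P_{\phi^{*}g_{S^n}}(u\circ\phi)=(P_{g_{S^n}}u)\circ\phi$ yields $P_{g_{S^n}}(v)=\psi^{(n+4)/(n-4)}(P_{g_{S^n}}u)\circ\phi$. Multiplying by $v$, integrating, and noting that $\psi^{2n/(n-4)}d\mu_{g_{S^n}}=\phi^{*}(d\mu_{g_{S^n}})$, the change of variables induced by $\phi$ then produces $E_{g_{S^n}}[v]=E_{g_{S^n}}[u]$.

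For the Kazdan-Warner identity I would compute $I:=\int_{S^n}(P_{S^n}u)(Xu)\,d\mu_{g_{S^n}}$ in two different ways and compare. Let $\sigma$ be the conformal factor of $X$, so $\mathcal{L}_{X}g_{S^n}=2\sigma g_{S^n}$ and in particular $\operatorname{div}_{g_{S^n}}X=n\sigma$. Using the $Q$-curvature equation $P_{S^n}u=\tfrac{n-4}{2}Q_g u^{(n+4)/(n-4)}$, I would rewrite $(P_{S^n}u)(Xu)=\tfrac{(n-4)^{2}}{4n}Q_g\,X(u^{2n/(n-4)})$ and integrate by parts against the conformal vector field $X$, obtaining
\[
I \;=\; -\tfrac{(n-4)^{2}}{4n}\int_{S^n}\langle X,\nabla Q_g\rangle u^{2n/(n-4)}\,d\mu_{g_{S^n}}\;-\;\tfrac{(n-4)^{2}}{4}\int_{S^n}\sigma Q_g u^{2n/(n-4)}\,d\mu_{g_{S^n}}.
\]

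The main obstacle is to evaluate $I$ a second time, independently of $\nabla Q_g$, producing only the $\sigma Q_g$ remainder. For this I would use the self-adjointness of $P_{S^n}$ together with the commutator identity
\[
[X,P_{S^n}]\varphi \;=\; -\tfrac{n+4}{2}\sigma\, P_{S^n}\varphi \;+\; \tfrac{n-4}{2}\,P_{S^n}(\sigma\varphi),
\]
which I would derive by differentiating \eqref{conformal-invariant} along a one-parameter group of conformal transformations generated by $X$. Pairing the commutator identity with $u$ and integrating by parts turns $I$ into a multiple of $\int_{S^n}\sigma u\,P_{S^n}u\,d\mu_{g_{S^n}}$, which by a second application of the $Q$-curvature equation equals $-\tfrac{(n-4)^{2}}{4}\int_{S^n}\sigma Q_g u^{2n/(n-4)}\,d\mu_{g_{S^n}}$. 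Comparing the two expressions for $I$, the $\sigma Q_g$-terms cancel exactly and the desired identity $\int_{S^n}\langle X,\nabla Q_g\rangle u^{2n/(n-4)}\,d\mu_{g_{S^n}}=0$ drops out.
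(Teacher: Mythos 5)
Your proof is correct. For the first identity $E_{g_{S^n}}[v]=E_{g_{S^n}}[u]$ you use exactly the paper's argument: conformal covariance \eqref{conformal-invariant} together with naturality under the diffeomorphism $\phi$ and a change of variables. For the Kazdan--Warner identity you take a genuinely different route. The paper runs the one-parameter group $\phi(t)$ generated by $X$, writes $w(t)=(u\circ\phi(t))(\det d\phi)^{(n-4)/2n}$, differentiates the transformed $Q$-curvature equation
$P_{S^n}(w)=\tfrac{n-4}{2}(Q_g\circ\phi)\,w^{(n+4)/(n-4)}$
in $t$, and then imposes both the volume conservation $\int w^{(n+4)/(n-4)}w_t\,d\mu=0$ and the energy conservation $\tfrac{d}{dt}E_{g_{S^n}}[w(t)]=0$ (which follows from the first assertion) to cancel the remainder term and extract the identity at $t=0$. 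Your argument instead evaluates the pairing $I=\int(P_{S^n}u)(Xu)\,d\mu$ in two ways: once by substituting the $Q$-curvature equation and integrating $\operatorname{div}(Q_g X)$ by parts, and once by pairing the infinitesimal conformal-covariance commutator $[X,P_{S^n}]\varphi=-\tfrac{n+4}{2}\sigma P_{S^n}\varphi+\tfrac{n-4}{2}P_{S^n}(\sigma\varphi)$ with $u$ and using self-adjointness of $P_{S^n}$. (I checked the commutator: differentiating $P_{S^n}((\varphi\circ\phi_t)w_t)=w_t^{(n+4)/(n-4)}(P_{S^n}\varphi)\circ\phi_t$ at $t=0$ with $\dot w_0=\tfrac{n-4}{2}\sigma$ gives exactly your formula; and the resulting second evaluation $I=-\tfrac{(n-4)^2}{4}\int\sigma Q_g u^{2n/(n-4)}d\mu$ matches, so the $\sigma Q_g$-terms cancel as claimed.) The trade-offs: the paper's proof is shorter once $E[v]=E[u]$ is in hand and never needs the explicit commutator coefficients, but it depends on the first assertion; your Bourguignon--Ezin--style argument is logically self-contained for the second identity (it does not invoke $E[v]=E[u]$ at all) and makes the mechanism of cancellation completely explicit, at the cost of deriving and manipulating the commutator identity.
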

\begin{proof} By \eqref{conformal-invariant} and variable
change formula, a direct computation yields
\begin{eqnarray*}
E_{g_{S^n}}[v]&=&\int_{S^n}v P_{S^n}v d\mu_{S^n}\\
&=&\int_{S^n} (u\circ \phi)|\det d\phi|^{n-4 \over 2n}
P_{S^n}( (u\circ \phi)|\det d\phi|^{n-4 \over 2n})d\mu_{S^n}\\
&=&\int_{S^n}(u\circ \phi) P_{|\det d\phi|^{2/n}g_{S^n}} (u\circ \phi) (\det d\phi) d\mu_{S^n}\\
&=&\int_{S^n}(u\circ \phi) P_{\phi^\ast(g_{S^n})}(u\circ \phi) d\mu_{\phi^\ast(g_{S^n})}\\
&=&\int_{S^n}u P_{S^n}(u)d\mu_{S^n}=E_{g_{S^n}}[u].
\end{eqnarray*}

For the second assertion, denote by $\phi(t)$ the family of smooth
conformal transformations on $S^n$ induced by the vector field $X$
with $\phi(0)=\text{id}$, and set its corresponding conformal vector
field $\xi(t)=(d\phi(t))^\ast {d \phi \over dt}$. In particular,
$X=\xi(0)$. Define the companion of $u$ relating to $\phi(t)$ by
$$w(t)=(u\circ\phi(t))(\det d\phi)^{n-4 \over 2n} \text{~~or~~} w^{4 \over n-4}g_{S^n}=\phi^\ast(g).$$
From conformal covariance (\ref{conformal-invariant}) of $P_{S^n}$,
$w$ solves
\begin{equation}\label{Q-curvature_w}
P_{S^n}(w)={n-4 \over 2} (Q_g \circ \phi) w^{n+4 \over n-4}
\text{~~on~~} S^n.
\end{equation}
Differentiate (\ref{Q-curvature_w}) with respect to $t$ to yield
\begin{equation}\label{eqn_w_t}
P_{S^n}(w_t)={n-4 \over 2}\Big[\xi \cdot d(Q_g \circ \phi)w^{n+4 \over
n-4}+{n+4 \over n-4}w^{8 \over n-4}w_t\big].
\end{equation}
Since the volume
$$\int_{S^n}w(t)^{2n \over n-4}d\mu_{S^n}=\int_{S^n}u^{2n
\over n-4}d\mu_{S^n}$$ 
is preserved, it yields that
\begin{equation}\label{vol_t}
\int_{S^n}w^{n+4 \over n-4} w_t d\mu_{S^n}=0.
\end{equation}
By Lemma \ref{conf_inv_P_g}, (\ref{eqn_w_t}) and (\ref{vol_t}), we
have
\begin{eqnarray*}
0&=&{d \over dt}E_{g_{S^n}}[u]={d \over dt}E_{g_{S^n}}[w(t)]=2\int_{S^n}P_{S^n}(w_t)w d\mu_{S^n}\\
&=& (n-4) \int_{S^n}[\xi \cdot d(Q_g \circ \phi)w^{n+4 \over n-4}+{n+4
\over n-4}w^{8 \over n-4}w_t]w d\mu_{S^n}\\
&=&(n-4) \int_{S^n}\xi \cdot d(Q_g \circ \phi) w^{2n \over
n-4}d\mu_{S^n}.
\end{eqnarray*}
Therefore, the desired assertion is followed by the above identity
evaluated at $t=0$.
\end{proof}

\section{Proof of the main result}\label{sec3}

\indent \indent In order to prove the main Theorem \ref{Thm1}, we first need an elementary result.
\begin{lemma}\label{lem1}
For any fixed $\epsilon>0$, then there exist $C_1(n,\epsilon)>0$ and $C_2(n,\epsilon)>0$ such that as $\alpha \to 0^+$, 
 \begin{eqnarray*}
 &&\int_{0}^{\frac{\epsilon}{\alpha}}\Big[1-\tfrac{(n-4)(n^2-4n+8)}{n(n-2)}\tfrac{\sigma^4}{(1+\sigma^2)^2}\Big](1+\sigma^2)^{4-n}\sigma^{n-1}d\sigma\\
 &=&\left\{ \begin{array}{ll}
 -C_1(n,\epsilon), \quad &\hbox{~~if~~} n>8;\\
 C_2(n,\epsilon)\log \alpha, \quad &\hbox{~~if~~} n=8.
 \end{array}
 \right.
 \end{eqnarray*}
\end{lemma}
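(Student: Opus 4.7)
The plan is to split the integrand into two Beta-function pieces and analyze each by passing to the Beta-integral limit. Write $I(\alpha)=J_1(\alpha)-\frac{(n-4)(n^2-4n+8)}{n(n-2)}J_2(\alpha)$, with
$$
J_1(\alpha)=\int_0^{\epsilon/\alpha}(1+\sigma^2)^{4-n}\sigma^{n-1}d\sigma,\quad J_2(\alpha)=\int_0^{\epsilon/\alpha}(1+\sigma^2)^{2-n}\sigma^{n+3}d\sigma.
$$
The substitution $t=\sigma^2/(1+\sigma^2)$, under which the upper limit becomes $t_0=(\epsilon/\alpha)^2/[1+(\epsilon/\alpha)^2]\to 1^-$ as $\alpha\to 0^+$, turns both into incomplete Beta integrals with the same factor $(1-t)^{(n-10)/2}$:
$$
J_1=\tfrac12\int_0^{t_0}t^{n/2-1}(1-t)^{(n-10)/2}dt,\quad J_2=\tfrac12\int_0^{t_0}t^{(n+2)/2}(1-t)^{(n-10)/2}dt.
$$
This is the key reduction: both dimension regimes are now governed by the single exponent $(n-10)/2$.

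For $n>8$ this exponent exceeds $-1$, so the integrals converge as $t_0\to 1$ and the tail $\int_{\epsilon/\alpha}^\infty$ is $O(\alpha^{n-8})$. Using $\Gamma((n+4)/2)/\Gamma(n/2)=n(n+2)/4$ and $\Gamma(n-4)/\Gamma(n-2)=1/[(n-3)(n-4)]$, I would reduce $\lim_{\alpha\to 0^+}I(\alpha)$ to
$$
\frac{\Gamma(n/2)\,\Gamma((n-8)/2)}{2\,\Gamma(n-4)}\left[1-\frac{(n^2-4n+8)(n+2)}{4(n-2)(n-3)}\right].
$$
Showing this has the required form $-C_1(n,\epsilon)<0$ reduces, after clearing denominators in the bracket, to proving $p(n):=n^3-6n^2+20n-8>0$ for $n\geq 8$; since $p'(n)=3n^2-12n+20$ has discriminant $144-240<0$ and is therefore everywhere positive, $p$ is strictly increasing, and $p(8)=280>0$ finishes the case.

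For $n=8$ the exponent is $-1$ and both pieces diverge logarithmically, so I would avoid subtracting infinities by using the algebraic identity (valid at $n=8$, where the coefficient equals $10/3$)
$$
1-\tfrac{10}{3}\frac{\sigma^4}{(1+\sigma^2)^2}=-\tfrac{7}{3}+\tfrac{10}{3}\frac{2\sigma^2+1}{(1+\sigma^2)^2}
$$
to split the integrand into a genuinely logarithmic piece $-\tfrac{7}{3}\sigma^7(1+\sigma^2)^{-4}$ and a remainder of size $O(\sigma^{-3})$ at infinity whose integral is uniformly $O(1)$. For the log piece the substitution and a one-line polynomial division give $\tfrac12\int_0^{t_0}t^3/(1-t)\,dt=-\tfrac12\log(1-t_0)+O(1)$; since $1-t_0\sim\alpha^2/\epsilon^2$, this equals $-\log\alpha+O(1)$, and multiplying by $-7/3$ yields $I(\alpha)=\tfrac{7}{3}\log\alpha+O(1)$, so $C_2(n,\epsilon)=7/3$ to leading order.

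The main obstacle is the sign check in the $n>8$ case: the individual Beta pieces $J_1,J_2$ are both positive, so the overall negativity of $\lim I(\alpha)$ must emerge from the precise balance between the prefactor $(n-4)(n^2-4n+8)/[n(n-2)]$ and the Gamma-ratio $J_2/J_1=n(n+2)/[4(n-3)(n-4)]$. The cubic inequality $p(n)>0$ for $n\geq 8$ is the clean algebraic payoff of this reduction, and without the reduction the sign is not at all obvious from the original integrand, which changes sign once on $(0,\epsilon/\alpha)$.
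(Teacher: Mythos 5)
Your proof is correct, but it follows a genuinely different route from the paper. The paper's proof integrates $\int\sigma^{n-1}(1+\sigma^2)^{4-n}\,d\sigma$ by parts twice, which rewrites the full expression as two explicit boundary terms plus a single integral $\int_0^{\epsilon/\alpha}\sigma^{n+3}(1+\sigma^2)^{2-n}\,d\sigma$ with coefficient $-\tfrac{n-4}{n(n+2)(n-2)}\big[(n-8)(n^2+2n+36)+280\big]$; the asymptotics for $n>8$ (boundary terms vanish, integral has a finite positive limit) and $n=8$ (integrand $\sim\sigma^{-1}$, giving $-\log\alpha$) are then read off in one shot, with the sign manifest from the factorization $(n-8)(n^2+2n+36)+280$. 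You instead pass to incomplete Beta integrals via $t=\sigma^2/(1+\sigma^2)$, reducing everything to one exponent $(n-10)/2$, and for $n>8$ evaluate both pieces by complete Beta/Gamma ratios and verify the sign through the same cubic $n^3-6n^2+20n-8>0$ (which you establish by monotonicity and $p(8)=280$ rather than by factoring). Your handling of $n=8$ is also different and quite clean: the algebraic split $1-\tfrac{10}{3}\tfrac{\sigma^4}{(1+\sigma^2)^2}=-\tfrac{7}{3}+\tfrac{10}{3}\tfrac{2\sigma^2+1}{(1+\sigma^2)^2}$ isolates the logarithmic piece and shows directly $C_2=7/3$, which agrees with what the paper's coefficient gives at $n=8$. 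The paper's derivation is more elementary and uniform in $n$ (one identity covers all cases); your Beta reduction is heavier machinery but makes the sign question a transparent algebraic inequality, and it pins down the explicit constant $C_2$ more visibly. Both arguments prove the same asymptotics.
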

\begin{proof}
By a direct computation, one has
\begin{eqnarray*}
&&\int \sigma^{n-1}(1+\sigma^2)^{4-n}d\sigma \\
&=& {1\over n}\sigma^n(1+\sigma^2)^{4-n}+{2(n-4)\over n}\int \sigma^{n+1}(1+\sigma^2)^{3-n}d\sigma\\
&=& {1\over n}\sigma^n(1+\sigma^2)^{4-n}+{2(n-4) \over n(n+2)} \sigma^{n+2}(1+\sigma^2)^{3-n}\\
& & +{4(n-4)(n-3)\over n(n+2)}\int \sigma^{n+3} (1+\sigma^2)^{2-n} d\sigma.
\end{eqnarray*}
Thus we conclude that
\begin{eqnarray*}
&&\int_{0}^{\frac{\epsilon}{\alpha}}\Big[1-\tfrac{(n-4)(n^2-4n+8)}{n(n-2)}\tfrac{\sigma^4}{(1+\sigma^2)^2}\Big](1+\sigma^2)^{4-n}\sigma^{n-1}d\sigma\\
&=& {1\over n}(\tfrac{\epsilon}{\alpha})^n(1+(\tfrac{\epsilon}{\alpha})^2)^{4-n}+\tfrac{2(n-4)}{n(n+2)} (\tfrac{\epsilon}{\alpha})^{n+2}(1+(\tfrac{\epsilon}{\alpha})^2)^{3-n}\\
& & +\tfrac{n-4}{ n}\Big(\tfrac{4(n-3)}{n+2}-\tfrac{n^2-4n+8}{n-2}\Big)\int_0^{\tfrac{\epsilon}{\alpha}} \sigma^{n+3} (1+\sigma^2)^{2-n} d\sigma\\
&=& \tfrac{1}{n}(\tfrac{\epsilon}{\alpha})^n(1+(\tfrac{\epsilon}{\alpha})^2)^{4-n}+\tfrac{2(n-4)}{n(n+2)} (\tfrac{\epsilon}{\alpha})^{n+2}(1+(\tfrac{\epsilon}{\alpha})^2)^{3-n}\\
& & -\tfrac{n-4}{n(n+2)(n-2)}\big[(n-8)(n^2+2n+36)+280\big]\int_0^{\epsilon\over\alpha} \sigma^{n+3} (1+\sigma^2)^{2-n} d\sigma.
\end{eqnarray*}
Note that when $n>8$, the first two terms on the right hand side of the last identity go to 0 when $\alpha\to 0$, and the last term has a negative limit, depending only on $n$. When $n=8$, the first two terms have finite limits, but the last term goes to $-\infty$ at the speed of $-\log \alpha$. Therefore, the proof is complete.
\end{proof}

\noindent{\bf The proof of Theorem \ref{Thm1}.~~}
By Theorem 5.1 in \cite{lp}, there exists some conformal metric $g$ in the conformal class of $g_0$ with conformal normal coordinates at $p$, such that $\forall N \geq 5$, in a chart near $p$ there hold
\begin{eqnarray*}
&&\det g=1+O(r^N), \quad r=|x|=\hbox{dist}(x,p),\\
&&R_g=O(r^2), \quad -\Delta_g R_g(p)=\frac{1}{6}|W(p)|^2.
\end{eqnarray*}
Using the above facts and \ref{def_Q-curv}, one obtains
$$Q_g=\frac{|W(P)|^2}{12(n-1)}+O(r).$$

With its corresponding polar coordinates $\{(r, \varphi); \varphi \in S^{n-1} \}$ and polar metric $\tilde{g}$, there hold
$$\tilde{g}_{rr}=1, \quad \tilde{g}_{r\varphi}=0.$$
Thus we obtain
\begin{eqnarray*}
\sqrt{\det g} ~dx^1 \wedge \cdots \wedge dx^n=r^{n-1}\sqrt{\det g} ~dr \wedge d\Omega_{S^{n-1}},
\end{eqnarray*}
where $d\Omega_{S^{n-1}}$ denotes the volume form on $S^{n-1}$. As shown in \cite{gur_mal}, for any radial function $u$, one has
$$\Delta_g u=\Delta_0 u+O(r^{N-1})u',$$
where $\Delta_0$ denotes the Euclidean Laplacian.

Let $\eta_\epsilon \in C_c^\infty(\mathbb{R}^n)$ be a cutoff function for $\epsilon>0$, $\eta_\epsilon=1$ in $B_\epsilon(0)$ and $\eta_\epsilon=0$ outside of $B_{2\epsilon}(0)$. Let
$$u_\alpha(x)=\Big(\frac{2 \alpha}{\alpha^2+|x|^2}\Big)^{\frac{n-4}{2}}, \hbox{~~for any~~} \alpha>0,$$
then,
$$q(S^n)=\frac{n-4}{2}Q_{S^n}\omega_n^{\frac{4}{n}}=\frac{\int_{\mathbb{R}^n}|\Delta_0 u_\alpha|^2 dx}{\Big(\int_{\mathbb{R}^n}u_\alpha^{\frac{2n}{n-4}}dx\Big)^{\frac{n-4}{n}}},$$
where $\omega_n=\hbox{vol}(S^n,g_{S^n})$ denotes the volume of the standard sphere $S^n$. For the Paneitz-Sobolev constant $q(S^n)$, one may refer to Proposition 1.1 in \cite{dhl}.

Notice that
 \begin{eqnarray*}
 &&u_\alpha'=-(n-4)\frac{r}{\alpha^2+r^2}u_\alpha,\quad u_\alpha''=\frac{(n-4)(2r^2-\alpha^2)}{(\alpha^2+r^2)^2}u_\alpha,\\
 &&\Delta_0 u_\alpha=u_\alpha''+\frac{n-1}{r}u_\alpha'=-\frac{n-4}{(\alpha^2+r^2)^2}u_\alpha[(n-3)r^2+n\alpha^2].
 \end{eqnarray*}
 
For any fixed $\epsilon>0$, choose
$$\varphi_\alpha(x)=\eta_\epsilon(x)u_\alpha(x),$$
then
\begin{eqnarray*}
&&\int_{M^n}\varphi_\alpha P_g \varphi_\alpha d\mu_g\\
&=&\int_{M^n}\Big[ |\Delta_g \varphi_\alpha|^2-4A_g(\nabla \varphi_\alpha,\nabla \varphi_\alpha)+(n-2)\sigma_1(A_g)|\nabla \varphi_\alpha|_g^2+\frac{n-4}{2}Q_g \varphi_\alpha^2 \Big]d\mu_g.
\end{eqnarray*}
We start to compute term by term on the right hand side of the above identity.

(i) To estimate
\begin{eqnarray*}
&&\int_{M^n}|\Delta_g \varphi_\alpha|^2 d\mu_g\\
&=&\int_{B_\epsilon(0)}|\Delta_0 u_\alpha+O(r^{N-1})u_\alpha'|^2 (1+O(r^N))dx+\int_{A_\epsilon}|\Delta_g \varphi_\alpha|^2 d\mu_g,
\end{eqnarray*}
where $A_\epsilon=B_{2\epsilon}\setminus B_\epsilon(0)$.

It is not hard to check that
\begin{eqnarray*}
\int_{\mathbb{R}^n \setminus B_\epsilon(0)}|\Delta_0 u_\alpha|^2 dx&=&O\Big(\Big(\frac{\alpha}{\epsilon}\Big)^{n-4}\Big);\\
\int_{A_\epsilon}|\Delta_0 \varphi_\alpha|^2 dx&=&\int_{A_\epsilon}|\Delta_0 \eta_\epsilon u_\alpha + 2 \eta_\epsilon' u_\alpha'+\eta_\epsilon \Delta_0 u_\alpha|^2 dx\\
&\leq& 2 \int_{A_\epsilon}[|\Delta_0 \eta_\epsilon u_\alpha|^2+|\Delta_0 u_\alpha|^2]dx\\
&=&O(\epsilon^{8-n}\alpha^{n-4})+O(\epsilon^{4-n}\alpha^{n-4});\\
\int_{A_\epsilon} |\Delta_g \varphi_\alpha |^2 d\mu_g&=&\int_{A_\epsilon}|\Delta_0 \varphi_\alpha+O(r^{N-1})\varphi_\alpha'|^2 (1+O(r^N)) dx\\
&=&O(\epsilon^{4-n}\alpha^{n-4})+O(\epsilon^{2N-n+4}\alpha^{n-4})
\end{eqnarray*}
Thus, we obtain
$$\int_{M^n}|\Delta_g \varphi_\alpha|^2 d\mu_g=\int_{\mathbb{R}^n}|\Delta_0 u_\alpha|^2 dx+O_\epsilon(\alpha^{n-4}).$$

(ii) To estimate
\begin{eqnarray*}
&&\int_{M^n}Q_g \varphi_\alpha d\mu_g\\
&=&\int_{B_\epsilon(0)}\big(\tfrac{|W(P)|^2}{12(n-1)}+O(r)\big)u_\alpha^2(1+O(r^N))dx+\int_{A_\epsilon}\big(\tfrac{|W(P)|^2}{12(n-1)}+O(r)\big)\varphi_\alpha^2 (1+O(r^N))dx.
\end{eqnarray*}

Thus one has
\begin{eqnarray*}
&&\frac{n-4}{2}\int_{M^n}Q_g \varphi_\alpha d\mu_g\\
&=&\frac{n-4}{24(n-1)}|W(p)|^2\int_{B_\epsilon(0)}u_\alpha^2 dx+\int_{B_{\epsilon}(0)}O(r) u_\alpha^2 dx+\int_{A_\epsilon}O(1)u_\alpha^2 dx\\
&=&\frac{n-4}{24(n-1)}|W(p)|^2\int_{B_\epsilon(0)}u_\alpha^2 dx+O_\epsilon(\alpha^{n-4}).
 \end{eqnarray*}
 
 (iii) To estimate
 \begin{eqnarray*}
 &&-4\int_{B_\epsilon(0)}A_g(\nabla u_\alpha, \nabla u_\alpha)d\mu_g\\
 &=&-4\int_{B_\epsilon(0)}\big(A_{ij}(p)+A_{ij,k}(p)x^k+\tfrac{1}{2}A_{ij,kl}(p)x^kx^l+O(r^3)\big)x^ix^j r^{-2}|u_\alpha'|^2(1+O(r^N))dx\\
&=&-2\int_{B_\epsilon(0)}\big(A_{ij,kl}(p)x^kx^l+O(r^3)\big)x^ix^jr^{-2}|u_\alpha'|^2 dx\\
&=&-\frac{|W(p)|^2}{6n(n-1)(n-2)}\int_{B_\epsilon(0)}r^2 |u_\alpha'|^2 dx+\int_{B_\epsilon(0)}O(r^3)|u_\alpha'|^2 dx.
\end{eqnarray*}
Observe that
\begin{eqnarray*}
&&\int_{B_\epsilon(0)}O(r^3)|u_\alpha'|^2dx\\
&=& \int_0^{\epsilon}O(r^3)\frac{r^2}{(\alpha^2+r^2)^2}\Big(\frac{2\alpha}{\alpha^2+r^2}\Big)^{n-4}r^{n-1}dr\\
&\stackrel{\sigma=\tfrac{\epsilon}{\alpha}}{=}&O(\alpha^5)\Big[O(1)+\int_1^{\tfrac{\epsilon}{\alpha}}\sigma^{8-n}d\sigma\Big]\\
&=&\left\{\begin{array}{lll}
O_\epsilon(\alpha^4), \quad & \hbox{~~if~~} n=8;\\
O_\epsilon(\alpha^5 \log \alpha^{-1}), \quad &\hbox{~~if~~} n=9;\\
O_\epsilon(\alpha^5) \quad &\hbox{~~if~~} n\geq 10.
\end{array}
\right.
\end{eqnarray*}
and
\begin{eqnarray*}
&&\int_{A_\epsilon}A_g(\nabla \varphi_\alpha, \nabla \varphi_\alpha)d\mu_g\\
&=&O_\epsilon (1) \int_{A_\epsilon} \big[|u_\alpha'|^2+|u_\alpha|^2\big]dx\\
&=&O_\epsilon(\alpha^{n-4}).
\end{eqnarray*}
Thus, we obtain
\begin{eqnarray*}
&&-4\int_{B_\epsilon(0)}A_g(\nabla u_\alpha, \nabla u_\alpha)d\mu_g\\
&=&-\frac{|W(p)|^2}{6n(n-1)(n-2)}\int_{B_\epsilon(0)}r^2 |u_\alpha'|^2 dx+\left\{\begin{array}{lll}
O_\epsilon(\alpha^4), \quad & \hbox{~~if~~} n=8;\\
O_\epsilon(\alpha^5 \log \alpha^{-1}), \quad &\hbox{~~if~~} n=9;\\
O_\epsilon(\alpha^5) \quad &\hbox{~~if~~} n\geq 10.
\end{array}
\right.
\end{eqnarray*}
 
 (iv) To estimate
 \begin{eqnarray*}
 &&(n-2)\int_{B_\epsilon(0)}\sigma_1(A_g)|u_\alpha'|^2 d\mu_g\\
 &=&\frac{n-2}{2(n-1)}\int_{B_\epsilon(0)}R_g |u_\alpha'|^2 (1+O(r^N))dx\\
 &=&-\frac{n-2}{24n(n-1)}|W(p)|^2 \int_{B_\epsilon(0)}r^2|u_\alpha'|^2 dx+\int_{B_\epsilon(0)}O(r^3)|u_\alpha'|^2 dx.
 \end{eqnarray*}
 Together with some computations of (iii), we obtain
 \begin{eqnarray*}
 && (n-2)\int_{M^n}\sigma_1(A_g)|u_\alpha'|^2 d\mu_g\\
 &=&-\frac{n-2}{24n(n-1)}|W(p)|^2 \int_{B_\epsilon(0)}r^2|u_\alpha'|^2 dx
 +\left\{\begin{array}{lll}
O_\epsilon(\alpha^4), \quad & \hbox{~~if~~} n=8;\\
O_\epsilon(\alpha^5 \log \alpha^{-1}), \quad &\hbox{~~if~~} n=9;\\
O_\epsilon(\alpha^5), \quad &\hbox{~~if~~} n\geq 10.
\end{array}
\right.
 \end{eqnarray*}
 
 Consequently, we are in a position to compute the coefficient of $|W(p)|^2$ by Lemma \ref{lem1}:
 \begin{eqnarray*}
 &&\frac{n-4}{24(n-1)}\int_{B_\epsilon(0)}u_\alpha^2 dx-\frac{n^2-4n+8}{24n(n-1)(n-2)}\int_{B_\epsilon(0)}r^2 |u_\alpha'|^2 dx\\
&=&\frac{n-4}{24(n-1)}\int_{B_\epsilon(0)}u_\alpha^2\Big[1-\frac{(n-4)(n^2-4n+8)}{n(n-2)}\frac{r^4}{(\alpha^2+r^2)^2}\Big]dx\\
&\stackrel{\sigma=\tfrac{\epsilon}{\alpha}}{=}&\frac{(n-4)2^{n-4}\omega_{n-1}}{24(n-1)}\alpha^4 \int_{0}^{\frac{\epsilon}{\alpha}}\Big[1-\tfrac{(n-4)(n^2-4n+8)}{n(n-2)}\tfrac{\sigma^4}{(1+\sigma^2)^2}\Big](1+\sigma^2)^{4-n}\sigma^{n-1}d\sigma\\
&=&-\left\{ \begin{array}{ll}
 O_\epsilon(\alpha^4), \quad &\hbox{~~if~~} n>8;\\
 O_\epsilon(\alpha^4 \log \alpha^{-1}), \quad &\hbox{~~if~~} n=8.
 \end{array}
 \right.
 \end{eqnarray*}
 
 (v) To estimate
 \begin{eqnarray*}
 &&\int_{M^n}\varphi_\alpha^{\frac{2n}{n-4}}d\mu_g\\
 &=&\int_{\mathbb{R}^n}u_\alpha^{\frac{2n}{n-4}}dx+\Big[\int_{\mathbb{R}^n\setminus B_\epsilon(0)}+\int_{A_\epsilon}\varphi_\alpha^{\frac{2n}{n-4}}(1+O(r^N))dx\Big]\\
 &=&\int_{\mathbb{R}^n}u_\alpha^{\frac{2n}{n-4}}dx+O((\tfrac{\alpha}{\epsilon})^n).
  \end{eqnarray*}
 Thus, we obtain
 $$\Big(\int_{M^n}\varphi_\alpha^{\frac{2n}{n-4}}d\mu_g\Big)^{\frac{n-4}{n}}=\Big(\int_{\mathbb{R}^n}u_\alpha^{\frac{2n}{n-4}}dx\Big)^{\frac{n-4}{n}}+O((\tfrac{\alpha}{\epsilon})^n).$$
 
Therefore, putting these above facts together, we conclude that
\begin{eqnarray*}
q(g)&\leq&\frac{\int_{M^n}\varphi_\alpha P_g \varphi_\alpha d\mu_g}{\Big(\int_{M^n}\varphi_\alpha^{\frac{2n}{n-4}}d\mu_g\Big)^{\frac{n-4}{n}}}\\
&=&Y(S^n)- \left\{\begin{array}{ll}
O_\epsilon(\alpha^4)|W(p)|^2+O_\epsilon(\alpha^5),  &\hbox{~~if~~} n\geq10\\
O_\epsilon(\alpha^4)|W(p)|^2+O_\epsilon(\alpha^5 \log \alpha^{-1}), &\hbox{~~if~~} n=9\\
O_\epsilon(\alpha^4 \log \alpha^{-1})|W(p)|^2+O_\epsilon(\alpha^4), &\hbox{~~if~~} n=8
\end{array}
\right.\\
&<&Y(S^n) 
\end{eqnarray*}
for all $n \geq 8$ by choosing $\alpha$ sufficiently small. \hfill $\Box$

\section{An application to the convergence of the nonlocal $\mathbf{Q}$-curvature flow}\label{sec4}

\indent\indent Recently, Gursky and Malchiodi introduced \cite{gur_mal} a nonlocal $Q$-curvature flow on a closed Riemannian manifold $(M^n,g_0)$ of dimension $n \geq 5$:
\begin{eqnarray}
\frac{\partial u}{\partial t}&=&-u+\mu P_{g_0}^{-1}\big(|u|^{\tfrac{n+4}{n-4}}\big)\label{gm_Qflow}\\
u(0,x)&=&u_0\label{initial data}
\end{eqnarray}
for some initial data $u_0 \in C^\infty_\ast$, where
$$\mu(t)=\frac{\int_{M^n}uP_g u d\mu_{g_0}}{\Big(\int_{M^n}u^{\tfrac{2n}{n-4}}\Big)^{\tfrac{n-4}{n}}}$$
and
$$C^\infty_\ast\equiv\{w \in C^\infty(M^n,g_0); w>0, P_{g_0}w \geq 0\}.$$

Under the assumptions that the $Q_{g_0}$ is semi-positive and the scalar curvature $R_{g_0}$ is nonnegative, which yield that the Paneitz-Sobolev constant $q(g_0)=q(M^n,g_0)$ is positive by Proposition 2.3 in \cite{gur_mal}. The positivity of $u$ is preserved along the nonlocal $Q$-curvature flow and the long time existence of the above nonlocal $\mathbf{Q}$-curvature flow is established. From now on, we adopt the above assumptions in this section. Thus, the $Q$-curvature equation gives
$$P_{g_0}u=Q_g u^{\tfrac{n+4}{n-4}}, \qquad u>0 \hbox{~~on~~} M^n$$
where $Q_g$ is the $Q$-curvature of the flow metric $g(t)=u(t)^{\frac{4}{n-4}}g_0$. 

From Lemma 3.3 in \cite{gur_mal}, $\mu(t)$ is non-increasing and  uniformly bounded below and above by two positive constants, as well as the volume of the flow metric $\int_{M^n}d\mu_{g(t)}$, then it yields
$$\lim_{t \to \infty}\mu(t)=\mu_\infty>0.$$ 
For brevity, set
$$\varphi=-u+\mu P_{g_0}^{-1}\big(|u|^{\tfrac{n+4}{n-4}}\big)$$
and
$$F_2(t)=\int_{M^n}\varphi P_{g_0} \varphi d\mu_{g_0}.$$

In essence, we can further show the asymptotic behavior of $F_2(t)$ as $t \to \infty$.
\begin{lemma}
There holds
$$\lim\limits_{t \to \infty}F_2(t)=0.$$
\end{lemma}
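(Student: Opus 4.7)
The plan is to extract integrability of $F_2$ on $[0,\infty)$ from the monotonicity of the Paneitz quotient $\mu(t)$, then upgrade this to $F_2(t)\to 0$ by showing $F_2$ is uniformly continuous in $t$ and applying a Barb\u{a}lat-type lemma.

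The starting point is an identity obtained by applying $P_{g_0}$ to the flow equation \eqref{gm_Qflow}: since $\varphi = u_t$,
$$P_{g_0} u - \mu\, u^{(n+4)/(n-4)} = -P_{g_0}\varphi.$$
Differentiating $\mu(t)$ in time by the quotient rule, substituting $u_t=\varphi$, and using the displayed identity to eliminate $\int_{M^n}\varphi P_{g_0} u\,d\mu_{g_0}$ in favour of $F_2$ produces a formula of the form
$$\mu'(t) = -c(t) F_2(t) + R(t),$$
with $c(t)\ge c_0>0$ thanks to the uniform two-sided bound on $\int_{M^n}u^{2n/(n-4)}\,d\mu_{g_0}$ coming from Lemma~3.3 of \cite{gur_mal}, and with $R(t)$ controlled by lower-order volume-type terms. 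Since $\mu(t)$ is non-increasing with positive limit $\mu_\infty$, integrating the resulting inequality in $t$ gives $\int_0^\infty F_2(t)\,dt<\infty$.

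Next I would show that $F_2$ is Lipschitz in $t$. Differentiating $F_2(t) = \int_{M^n}\varphi P_{g_0}\varphi\,d\mu_{g_0}$ and substituting
$$\varphi_t = -\varphi + \mu'(t)\,P_{g_0}^{-1}\bigl(u^{(n+4)/(n-4)}\bigr) + \tfrac{n+4}{n-4}\mu(t)\,P_{g_0}^{-1}\bigl(u^{8/(n-4)}\varphi\bigr),$$
together with the self-adjointness of $P_{g_0}$, gives an explicit expression for $F_2'(t)$. Every term in this expression is controlled by the uniform $L^\infty$- and $C^k$-bounds for $u$ along the flow established in \cite{gur_mal}, together with the coercivity $\|\phi\|_{L^2}^2 \le C\int_{M^n}\phi P_{g_0}\phi\,d\mu_{g_0}$, which holds because the hypotheses $Q_{g_0}\ge 0$ and $R_{g_0}\ge 0$ force $P_{g_0}$ to be positive definite (Proposition~2.3 of \cite{gur_mal}). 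This yields $|F_2'(t)|\le M$ uniformly in $t$, so $F_2$ is Lipschitz on $[0,\infty)$.

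Finally, a non-negative, uniformly continuous function that is integrable on $[0,\infty)$ must tend to $0$, which completes the proof. I expect the main technical obstacle to lie in the Lipschitz step: the expression for $F_2'(t)$ contains a term proportional to $\mu'(t)\int_{M^n} u^{(n+4)/(n-4)}\varphi\,d\mu_{g_0}$ in which $\mu'$ is itself proportional to $F_2$, so one must carefully isolate and absorb the resulting higher-order $F_2$-contributions, and one must verify that the coercivity of $P_{g_0}$ is available in precisely the form needed to convert bounds on $F_2$ into $L^2$-bounds on $\varphi$ uniformly in $t$.
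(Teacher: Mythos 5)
Your first step (extract $\int_0^\infty F_2\,dt<\infty$ from the monotone, bounded-below quantity $\mu(t)$) is correct and is exactly the integrability the paper relies on. The overall Barb\u{a}lat-type strategy (integrable plus uniformly continuous implies decay) is also a legitimate idea and in fact captures the spirit of what the paper does. The gap is in your Lipschitz step, and it is a real one.

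When you differentiate $F_2$ and substitute $\varphi_t=-\varphi+\mu'\,P_{g_0}^{-1}(u^{(n+4)/(n-4)})+\frac{n+4}{n-4}\mu\,P_{g_0}^{-1}(u^{8/(n-4)}\varphi)$, the estimate you can actually extract with the tools that \emph{are} available unconditionally — namely $|\mu'|\le C\,F_2$ from Lemma~3.3 of~\cite{gur_mal}, the uniform $L^{2n/(n-4)}$ and volume bounds on $u$, and the coercivity $\|\varphi\|_{L^{2n/(n-4)}}^2\le q(g_0)^{-1}F_2$ — is
\[
|F_2'(t)|\le C\,F_2(t)\bigl(1+F_2(t)^{1/2}\bigr),
\]
which is \emph{not} a uniform Lipschitz bound unless you already know $\sup_t F_2(t)<\infty$. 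Your attempt to close this by invoking ``uniform $L^\infty$- and $C^k$-bounds for $u$ along the flow established in \cite{gur_mal}'' is not legitimate: Gursky--Malchiodi have $L^{2n/(n-4)}$-level and volume-level bounds only (their Lemma~3.3); uniform higher-order bounds along the nonlocal flow are precisely what concentration could destroy, and ruling that out is the point of proving $q(g_0)<q(S^n)$ — so appealing to them here would be circular. You flag this issue yourself in your last paragraph but do not resolve it.

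The paper's proof is designed to avoid exactly this circularity. It only uses the differential inequality $F_2'\le C\,F_2\bigl(1+F_2^{1/2}\bigr)$, which it derives from the positivity of $q(g_0)$ (Proposition~2.3 of~\cite{gur_mal}) and H\"older — no pointwise bounds on $u$. It then introduces the substitution $H(t)=\int_0^{F_2(t)}\frac{ds}{1+s^{1/2}}$, so that $H'\le C\,F_2$; picking a subsequence $t_j\to\infty$ with $F_2(t_j)\to 0$ (available from integrability) and integrating $H'$ from $t_j$ to $t$ gives uniform smallness of $H$, hence boundedness and then decay of $F_2$ via $H\ge C_0 F_2$ once $F_2$ is bounded. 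This $H$-device is the ingredient your sketch is missing: it converts the superlinear differential inequality into one you can integrate without a priori boundedness. If you want to keep the Barb\u{a}lat framing, you must first prove $\sup_t F_2(t)<\infty$ from the integrability and the differential inequality (essentially the same $H$-type argument), after which your Lipschitz claim and the Barb\u{a}lat conclusion go through.
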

\begin{proof}
By \eqref{gm_Qflow}, a direct computation yields
\begin{eqnarray*}
\frac{1}{2}\frac{d}{dt}F_2(t)&=&\int_{M^n}\varphi P_{g_0}\varphi_t d\mu_{g_0}\\
&=&-\int_{M^n}\varphi P_{g_0}\varphi d\mu_{g_0}+\mu_t\int_{M^n}\varphi u^{\tfrac{n+4}{n-4}}d\mu_{g_0}
+\frac{n+4}{n-4}\mu \int_{M^n}u^{\tfrac{8}{n-4}}\varphi^2 d\mu_{g_0}.
\end{eqnarray*}
By Lemma 3.3 in \cite{gur_mal} and H\"{o}lder's inequality, one has
\begin{eqnarray*}
\Big|\mu_t\int_{M^n}\varphi u^{\tfrac{n+4}{n-4}}d\mu_{g_0}\Big|&\leq& C F_2(t)\Big(\int_{M^n}\varphi^{2n \over n-4}d\mu_{g_0}\Big)^{n-4 \over 2n}\Big(\int_{M^n} u^{2n \over n-4}d \mu_{g_0}\Big)^{n+4 \over 2n}\\
&\leq& C q(g_0)^{-{1 \over 2}} F_2^{3 \over 2}(t).
\end{eqnarray*}
By H\"{o}lder's inequality, we estimate
\begin{eqnarray*}
\Big|\int_{M^n}\alpha f u^{8 \over n-4} \varphi^2 d\mu_{g_0}\Big|&\leq& C\Big(\int_{M^n}u^{2n \over n-4}d\mu_{g_0}\Big)^{4 \over n}\Big(\int_{M^n}\varphi^{2n \over n-4}d\mu_{g_0}\Big)^{n-4 \over n}\\
&\leq& C q(g_0)^{-1} F_2(t).
\end{eqnarray*}
Thus, we obtain
\begin{equation}\label{diff_ineq1}
\frac{d}{dt} F_2(t) \leq CF_2(t)(1+F_2(t)^{1 \over 2}).
\end{equation}
From the estimate $\int_0^\infty F_2(t)dt< \infty$, there exists a sequence $\{t_j\}$ with $t_j \to \infty$ as $j \to \infty$, such that
$$\lim_{t \to \infty}F_2(t_j)=0.$$
Set
$$H(t)=\int_0^{F_2(t)}\frac{ds}{1+s^{1 \over 2}}=2F_2(t)^{\tfrac{1}{2}}-2\log\big(1+F_2(t)^{1 \over 2}\big),$$
then as the same argument in \cite{chxu}, we assert that there exists some uniform constant $C_0>0$ such that  
$$H(t) \geq C_0 F_2(t)$$ 
for sufficiently large $t \geq 0$.   Using 
$\lim\limits_{j \to \infty}H(t_j)=0$ and \eqref{diff_ineq1}, for any $t \geq t_j$, we obtain
$$H(t) \leq H(t_j)+C \int_{t_j}^t F_2(\tau)d\tau.$$
Then, we conclude that 
$$\lim\limits_{t \to \infty}F_2(t) \leq C_0^{-1}\lim\limits_{t \to \infty}H(t)=0.$$
This completes the proof.
\end{proof}

As a direct consequence, we apply Theorem \ref{Thm1} to recover the sequential convergence of the above $Q$-curvature flow in a special case of \cite{gur_mal}.

\begin{corollary}
Let $(M^n,g_0)$ be a closed Riemannian manifold of $n \geq 8$, suppose $M^n$ is not locally conformally flat and Gursky-Malchiodi's assumptions hold true, that is, $Q_{g_0} \geq 0$ and is positive somewhere; $R_{g_0} \geq 0$. Then the nonlocal Q-curvature problem \eqref{gm_Qflow}-\eqref{initial data} is sequentially convergent as $t \to \infty$.
\end{corollary}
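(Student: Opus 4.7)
The strategy is to combine the preceding lemma ($F_2(t)\to 0$) with Theorem \ref{Thm1} ($q(g_0)<q(S^n)$) via a concentration--compactness argument along the flow. Since $\varphi(t)=u_t$, the decay $F_2(t)\to 0$ says that the velocity field of the flow vanishes in the $P_{g_0}$-energy norm, so one expects that along suitable subsequences $u(t)$ approaches a stationary solution of $P_{g_0}u=\mu_\infty u^{(n+4)/(n-4)}$.

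First I would fix a sequence $t_j\to\infty$ and gather the uniform a priori estimates from Lemma 3.3 of \cite{gur_mal}: positivity is preserved along the flow, $\mu(t)$ is non-increasing with $\mu(t)\to\mu_\infty\in(0,\infty)$, the volume $\int_{M^n} u(t)^{2n/(n-4)}\,d\mu_{g_0}$ is pinched between two positive constants, and $P_{g_0}u(t)\geq 0$. Together with the positivity $q(g_0)>0$ these yield a uniform $H^2(M^n)$ bound on $u(t_j)$. Passing to a subsequence, $u(t_j)\rightharpoonup u_\infty$ weakly in $H^2$, strongly in $L^p$ for every $p<2n/(n-4)$, and a.e.\ on $M^n$. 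Rewriting the flow equation as
\[
P_{g_0}\varphi(t_j)=-P_{g_0}u(t_j)+\mu(t_j)\,u(t_j)^{(n+4)/(n-4)},
\]
the left-hand side tends to $0$ in $H^{-2}$ by the preceding lemma, so in the limit $u_\infty$ solves $P_{g_0}u_\infty=\mu_\infty\,u_\infty^{(n+4)/(n-4)}$ weakly, hence smoothly by elliptic regularity for $P_{g_0}$.

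The heart of the argument is to upgrade the weak $L^{2n/(n-4)}$ convergence of $u(t_j)$ to strong convergence, i.e.\ to exclude concentration of mass at isolated points. Standard concentration--compactness for the critical Paneitz equation on $M^n$ (a Struwe-type bubble decomposition for $P_{g_0}$) asserts that a Palais--Smale-type sequence either converges strongly in $L^{2n/(n-4)}$ or else loses mass to rescaled standard bubbles modeled on the functions $u_\alpha$, and the presence of even one such bubble forces $\mu_\infty\geq q(S^n)$. Choosing the initial datum $u_0\in C^\infty_\ast$ so that $\mu(0)<q(S^n)$ --- which is possible precisely because Theorem \ref{Thm1} supplies a test function with Paneitz--Sobolev quotient strictly below $q(S^n)$ --- and using the monotonicity $\mu_\infty\leq\mu(0)$ from \cite{gur_mal}, one obtains $\mu_\infty<q(S^n)$. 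This excludes every bubble and yields strong convergence in $L^{2n/(n-4)}$; elliptic regularity for $P_{g_0}$ then upgrades it to $C^\infty$ convergence of $u(t_j)$ to $u_\infty$.

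The main obstacle is precisely this bubble-exclusion step: it is the only place where Theorem \ref{Thm1} enters in an essential way, and it plays the same role here that Aubin's strict Yamabe inequality plays in the resolution of the Yamabe problem.
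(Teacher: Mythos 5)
Your proposal follows essentially the same route the paper takes: both rest on Gursky--Malchiodi's Theorem 6.1, in which the decay $F_2(t)\to 0$ yields a Palais--Smale-type sequence along the flow, concentration--compactness for the critical Paneitz equation identifies potential loss of compactness with standard bubbles, and any bubble forces the energy level up to at least $q(S^n)$. The strict inequality $q(g_0)<q(S^n)$ from Theorem \ref{Thm1}, combined with the monotonicity $\mu_\infty\leq\mu(0)$, then rules out all bubbles and gives strong convergence. The paper is far terser --- it simply defers to Theorem 6.1 of \cite{gur_mal} --- and your write-up unpacks that cited argument in a reasonable and compatible way.

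One step you gloss over that the paper flags explicitly: the test functions $\varphi_\alpha=\eta_\epsilon u_\alpha$ produced in the proof of Theorem \ref{Thm1} realize a Paneitz--Sobolev quotient strictly below $q(S^n)$, but they need not lie in $C^\infty_\ast$, i.e.\ there is no reason for $P_{g_0}\varphi_\alpha\geq 0$ to hold. So the assertion ``choosing the initial datum $u_0\in C^\infty_\ast$ so that $\mu(0)<q(S^n)$ is possible precisely because Theorem \ref{Thm1} supplies a test function with quotient strictly below $q(S^n)$'' is not immediate: one must show that the infimum of the quotient \emph{restricted to the cone $C^\infty_\ast$} is still below $q(S^n)$. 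The paper addresses this by invoking Gursky--Malchiodi's explicit ``scheme to modify them to a sequence $\{\hat u^0_n\}$ of positive functions as initial data of the flow.'' To make your argument complete, you would need to cite or reproduce that modification, verifying that it preserves the strict sub-$q(S^n)$ energy level while landing in $C^\infty_\ast$.
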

\begin{proof}
As the proof of Theorem \eqref{Thm1}, based on test functions in the proof of Therem \ref{Thm1} or \cite{er}, Gursky-Malchiodi set up a scheme to modify them to a sequence $\{\hat{u}^0_n\}$ of positive functions as initial data of the flow. Then by adopting the same argument in Theorem 6.1 of \cite{gur_mal}, the sequential convergence of the nonlocal Q-curvature problem \eqref{gm_Qflow}-\eqref{initial data} follows.
\end{proof}

\end{document}